\documentclass{amsart}
\usepackage[active]{srcltx}
\usepackage{amsmath}\usepackage{amssymb}\usepackage{amscd}\usepackage{amsthm}\usepackage{amsfonts}
\usepackage{graphicx}
\usepackage{cite}

\newtheorem{theorem}{\textbf{Theorem}}[section]
\newtheorem{corollary}[theorem]{\textbf{Corollary}}

\newtheorem*{theo-intro}{\textbf{Theorem}}

\theoremstyle{definition}
\newtheorem{definition}[theorem]{\textbf{Definition}}

\theoremstyle{remark}
\newtheorem{remark}[theorem]{\textbf{Remark}}
\newtheorem{remarks}[theorem]{\textbf{Remarks}}

\newcounter{pic}\setcounter{pic}{0}

\numberwithin{equation}{section}

\DeclareRobustCommand\mmodels{\Relbar\joinrel\mathrel{|}}
\def\C{\mathbb{C}}
\def\Z{\mathbb{Z}}

\def\og{\overline{g}}

\newcommand{\Comp}{\operatorname{Comp}}

\newcommand{\Mat}{\operatorname{Mat}}

\def\cP{\mathcal{P}}

\begin{document}

\title{The HOMFLYPT polynomials of sublinks and the Yokonuma--Hecke algebras}

\author{L. Poulain d'Andecy}
\email{loic.poulain-dandecy@univ-reims.fr}
\address{Universit\'e de Reims Champagne-Ardenne, UFR Sciences exactes et naturelles, Laboratoire de Math\'ematiques EA 4535 Moulin de la Housse BP 1039, 51100 Reims, France}
\author{E. Wagner}
\email{emmanuel.wagner@u-bourgogne.fr}
\address{Universit\'e de Bourgogne-Franche-Comt\'e, Institut de mathŽmatiques de Bourgogne UMR 5584, 21000 Dijon, France}





\begin{abstract}
We describe completely the link invariants constructed using Markov traces on the Yokonuma--Hecke algebras in terms of the linking matrix and the HOMFLYPT polynomials of sublinks.
\end{abstract}

\maketitle

\section{Introduction}

 The Yokonuma--Hecke algebras (of type GL), denoted $Y_{d,n}$, have been used by J. Juyumaya and S. Lambropoulou to construct invariants for various types of links, in the same spirit as the construction of the HOMFLYPT polynomial from usual Hecke algebras. We refer to \cite{ChJKL} and references therein. In particular, the algebras $Y_{d,n}$ provide invariants for classical links.

The invariants coming from Yokonuma--Hecke algebras were obtained by a different method in \cite{JaPo}. In this approach, the starting point was the fact that the algebra $Y_{d,n}$ is isomorphic to a direct sum of matrix algebras with coefficients in tensor products of usual Hecke algebras. This allowed an explicit description of the space of Markov traces on $\{Y_{d,n}\}_{n\geq1}$, in terms of the known Markov trace on Hecke algebras. This in turn provided a whole space of invariants. The Juyumaya--Lambropoulou invariants are precisely identified among this family \cite[\S 6.5]{JaPo} (see also \cite{Po}).

Within the approach via the isomorphism, the study of the invariants coming from the Yokonuma--Hecke algebras is reduced to the understanding of the isomorphism. More precisely, once the isomorphism is properly understood (especially on images of braids), the invariants from the Yokonuma--Hecke algebras can be expressed explicitly as combinations of HOMFLYPT polynomials of some links. This is how the main result below is obtained.

The Markov traces on $\{Y_{d,n}\}_{n\geq1}$ were classified in \cite{JaPo}. For each $d>0$, the space of Markov traces is of dimension $2^d-1$. It was further proved in \cite{Po} that every invariant for links obtained via a Markov trace on $\{Y_{d,n}\}_{n\geq1}$ can be expressed in terms of  a family of traces $\{T_d(L)\}_{d>0}$. In this sense, it is enough to consider the set of invariants $\{T_d(L)\}_{d>0}$. Note that $T_d(L)$ was denoted $P^{d,\{1,\dots,d\}}_L$ in \cite{Po}.

The main result of this paper is that the invariants $\{T_d(L)\}_{d>0}$ coming from Yokonuma--Hecke algebras  $Y_{d,n}$ are described entirely in terms of linking numbers and HOMFLYPT polynomials of sublinks.
\begin{theo-intro}
Let $L$ be a link and $d>0$. We have
\begin{equation}
T_d(L)=d!\sum_{\{L_1,\dots,L_d\}}(\gamma')^{l(L_1,\dots,L_d)}P(L_1)\dots P(L_d) \in \C[u^{\pm1},v^{\pm1},\gamma'^{\pm1}],
\end{equation}
where the sum is over the set of all complete families $\{L_1,\dots,L_d\}$ of $d$ distinct non-empty sublinks of $L$.
\end{theo-intro}
In the formula above, $l(L_1,\dots,L_d)$ is the sum of  the linking numbers of the family and $P(L)$ is the HOMFLYPT polynomial of a link $L$. The parameters $u,v$ are the two parameters of the HOMFLYPT polynomial, while $\gamma'$ is an additional parameter specific to the construction of invariants from the Yokonuma--Hecke algebras. The Juyumaya--Lambropoulou invariants are certain linear combinations of the invariants $T_d(L)$, for some specialisations of the parameter $\gamma'$ \cite{Po}. For these invariants, a similar result has been proved independently by Lickorish using diagrammatic techniques (see \cite[Appendix]{ChJKL}).

Notice first that the invariant $T_d(L)$ is only non trivial if $d$ is smaller than the number of connected components of $L$. Secondly the previous invariant was obtained via the study of Markov traces on the Yokonuma--Hecke algebras but a posteriori it is obviously an invariant. What is less clear while looking at the formula above is that it can be provided by Markov traces factoring  through $Y_{d,n}$. Finally, one can imagine defining very similar algebras starting for very similar formulas defined using other invariants than the HOMFLYPT invariants, for instance, the $2$-variable  Kauffman polynomial.

Besides, we indicate that the theorem admits a straightforward analogue for affine Yokonuma--Hecke algebras, which can be proved following exactly the same arguments than in this paper. In the affine case, the approach to invariants of links in the solid torus via an isomorphism theorem was explained in \cite{Po} (generalisations of Juyumaya--Lambropoulou invariants were obtained in \cite{ChPo}). Instead of the HOMFLYPT polynomials, one would obtain their analogues for links in the solid torus from \cite{La}. For clarity of the exposition, we present here only the non-affine case.

\section{Yokonuma--Hecke algebras and isomorphism theorem}\label{sec-def}

Let  $d,n \in\Z_{>0}$ and $u$ and $v$ be indeterminates. We work over the ring $\C[u^{\pm1},v]$.

\subsection{Definitions}

We use $\mathfrak{S}_n$ to denote the symmetric group on $n$ elements, and $s_i$ to denote the transposition $(i,i+1)$. The Yokonuma--Hecke algebra $Y_{d,n}$ is generated by elements
$$g_1,\ldots,g_{n-1},t_1,\ldots,t_n,$$
subject to the following defining relations (\ref{def-1})--(\ref{def-3}):
\begin{equation}\label{def-1}
\begin{array}{rclcl}
g_ig_j & = & g_jg_i &\  & \mbox{for $i,j=1,\ldots,n-1$ such that $\vert i-j\vert > 1$,}\\[0.1em]
g_ig_{i+1}g_i & = & g_{i+1}g_ig_{i+1} && \mbox{for $i=1,\ldots,n-2$,}
\end{array}
\end{equation}
\begin{equation}\label{def-2}
\hspace{-1.2cm}\begin{array}{rclcl}
t_it_j & =  & t_jt_i &\qquad&  \mbox{for $i,j=1,\ldots,n$,}\\[0.1em]
g_it_j & = & t_{s_i(j)}g_i && \mbox{for $i=1,\ldots,n-1$ and $j=1,\ldots,n$,}\\[0.1em]
t_j^d   & =  &  1 && \mbox{for $j=1,\ldots,n$,}
\end{array}
\end{equation}
\begin{equation}\label{def-3}
\hspace{-3.2cm}
\begin{array}{rclcl}
g_i^2  & = & u^2 + u\,v \, e_{i} \, g_i &\quad& \mbox{for $i=1,\ldots,n-1$,}
\end{array}
\end{equation}
where $e_i :=\displaystyle\frac{1}{d}\sum_{1\leq s\leq d}t_i^s t_{i+1}^{-s}$. The elements $e_i$ are idempotents and we have:
\begin{equation}
g_i^{-1} = u^{-2}g_i - u^{-1}v\, e_i  \qquad \mbox{for all $i=1,\ldots,n-1$}.
\end{equation}

\paragraph{\textbf{Hecke algebra $H_n$.}}  The Hecke algebra $H_n$ is isomorphic to the quotient of $Y_{d,n}$ (for any $d>0$) by the relations $t_j=1$, $j=1,\dots,n$. We denote by $\overline{\,\cdot\,}$ the corresponding surjective morphism from $Y_{d,n}$ to $H_n$. 

The algebra $H_n$ has generators denoted $\og_1,\dots,\og_{n-1}$ and as defining relations, the images of relations (\ref{def-1}) and (\ref{def-3}) by $\overline{\,\cdot\,}$. In particular, we have
\[\og_i^2  =  u^2 + u\,v \, \og_i \qquad\text{for $i=1,\ldots,n-1$ .}\]

\paragraph{\textbf{Compositions of $n$.}}

Let $\operatorname{Comp}_d (n)$ be the set of {\it $d$-compositions} of $n$, that is the set of $d$-tuples $\mu=(\mu_1,\ldots,\mu_d)\in\Z_{\geq0}^d$ such that $\sum_{1\leq a\leq d} \mu_a =n$. We denote $\mu\mmodels_d n$.

We denote by $H^{\mu}$ the algebra $H_{\mu_1} \otimes \ldots \otimes H_{\mu_d}$ (by convention $H_0:=\C[u^{\pm 1},v]$), where the tensor products are over $\C[u^{\pm1},v]$.\\

\paragraph{\textbf{Partitions of $\{1,\dots,n\}$.}}
 
Let $\cP_d(n)$ be the set of ordered partitions of $\{1,\dots,n\}$ into $d$ parts. An element $I\in \cP_d(n)$ consists of $d$ pairwise disjoint subsets $I_1,\dots,I_d$ of $\{1,\dots,n\}$ such that $I_1\cup\dots\cup I_d=\{1,\dots,n\}$. We denote $I=I_1\sqcup \dots\sqcup I_d$. Note that some subsets among $I_1,\dots,I_d$ can be empty.

We consider ordered partitions in the sense that $I=I_1\sqcup \dots\sqcup I_d$ and $J=J_1\sqcup \dots\sqcup J_d$ are the same element of $\cP_d(n)$ if and only if $I_a=J_a$, for $a=1,\dots,d$.

For a given $I\in\cP_d(n)$, there is an associated composition $\mu=\Comp(I)\mmodels_d n$ defined by $\mu_a=|I_a|$ for $a=1,\dots,d$. We say that $\mu$ is the \emph{shape} of $I$. For a given $\mu\mmodels_d n$, we denote by $\cP(\mu)$ the set of partitions $I$ such that $\Comp(I)=\mu$.

For a subset $S\subset \{1,\dots,n\}$ and a permutation $\pi\in\mathfrak{S}_n$, let $\pi(S):=\{\pi(s)\ |\ s\in S\}$. Thus, there is an action of the symmetric group $\mathfrak{S}_n$ on each $\cP(\mu)$. Namely, for $I\in\cP(\mu)$ and $\pi\in\mathfrak{S}_n$, $\pi(I)$ is the partition $\pi(I_1),\dots,\pi(I_d)$.

The cardinality of the set $\cP(\mu)$ is 
\begin{equation}\label{mmu}
|\cP(\mu)|=\displaystyle \frac{n!}{\mu_1!\mu_2!\dots\mu_d!}\ .
\end{equation}

\subsection{Isomorphism Theorem}

For $\mu\mmodels_d n$, we denote $\Mat_{\mu}(H^{\mu})$ the algebra of square matrices with coefficients in $H^{\mu}$ and with lines and columns indexed by elements of $\cP(\mu)$, that is, by partitions of $\{1,\dots,n\}$ of shape $\mu$. Below, we will give an element $x$ of $\bigoplus_{\mu\mmodels_d\,n}\Mat_{\mu}(H^{\mu})$, by giving all its coefficients, denoted  $x_{I,I'}$, where $I,I'\in\cP(\mu)$ and $\mu\mmodels_d n$.

Let $\{\xi_1,\dots,\xi_d\}$ be the set of complex roots of unity of order $d$. The following theorem is proved in \cite{JaPo} (see also \cite{Po}).
\begin{theorem}\label{theo-iso}
The algebra $Y_{d,n}$ is isomorphic to $\bigoplus_{\mu\mmodels_d\,n}\Mat_{\mu}(H^{\mu})$. The isomorphism $\Psi_{d,n}$ is given on the generators by the following formulas:
\begin{itemize}
\item Let $j\in\{1,\dots,n\}$. The only non-zero coefficients of $\Psi_{d,n}(t_j)$ are:
\begin{equation}\label{iso-1}
\Psi_{d,n}(t_j)_{I,I}= \xi_a\ ,
\end{equation}
where $I\in\cP_d(n)$ and $a\in\{1,\dots,d\}$ is determined by $j\in I_a$.
\item Let $i\in\{1,\dots,n-1\}$. The only non-zero coefficients of $\Psi_{d,n}(g_i^{\pm1})$ are:
\begin{equation}\label{iso-2}
\Psi_{d,n}(g_i^{\pm1})_{I,s_i(I)}= \left\{\begin{array}{ll}
u^{\pm1} & \text{if $s_i(I)\neq I$\,,}\\[0.5em]
1\otimes \dots 1\otimes \og_k^{\pm1}\otimes 1\dots \otimes 1\quad & \text{if $s_i(I) = I$\,,}
\end{array}\right.
\end{equation}
where $I\in\cP_d(n)$ and, in the second line, $\og_k^{\pm1}$ is in position $a$ determined by $i\in I_a$ and we have $k=|\{j\in I_a\ |\ j\leq i\}|$.
\end{itemize}

\end{theorem}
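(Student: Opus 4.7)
The plan is to prove the theorem in two stages: first verify that formulas (\ref{iso-1}) and (\ref{iso-2}) define an algebra homomorphism $\Psi_{d,n}$, then show this homomorphism is bijective by a dimension count together with a surjectivity argument.

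Well-definedness amounts to checking that the images $\Psi_{d,n}(g_i)$ and $\Psi_{d,n}(t_j)$ satisfy the relations (\ref{def-1})--(\ref{def-3}). The relations among the $t_j$'s are immediate since the $\Psi_{d,n}(t_j)$ are diagonal with $d$-th roots of unity entries. The relation $g_it_j=t_{s_i(j)}g_i$ follows from computing the $(I,s_i(I))$-entry of both sides: on the left one obtains $\Psi_{d,n}(g_i)_{I,s_i(I)}\cdot\xi_a$ with $j\in(s_i(I))_a$, on the right $\xi_b\cdot\Psi_{d,n}(g_i)_{I,s_i(I)}$ with $s_i(j)\in I_b$, and $a=b$ by the identity $s_i(I_a)=(s_i(I))_a$. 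For the quadratic relation, the decisive observation is that $\Psi_{d,n}(e_i)_{I,I}$ equals $1$ precisely when $s_i(I)=I$ (i.e.\ $i$ and $i+1$ lie in the same $I_a$) and vanishes otherwise; the relation then reduces either to the trivial identity $u^2=u^2$ or, in the case $s_i(I)=I$, to the Hecke quadratic relation $\og_k^2=u^2+uv\,\og_k$ in the appropriate factor of $H^\mu$.

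The remaining relation is the braid relation $g_ig_{i+1}g_i=g_{i+1}g_ig_{i+1}$, which I expect to be the main technical obstacle. One analyses the $(I,J)$-entries of both sides with $J=s_is_{i+1}s_i(I)=s_{i+1}s_is_{i+1}(I)$ by casework on how $i$, $i+1$, $i+2$ are distributed among the parts of $I$: all three in the same $I_a$ (recovering the Hecke braid relation inside a single tensor factor), two in one part and one in another (several subcases producing matching products of $u$'s and $\og_k$'s), and all three in distinct parts (both sides reduce to $u^3$). The bookkeeping involves tracking how the local index $k=|\{j\in I_a\,:\,j\leq i\}|$ shifts under the transpositions $s_i$ and $s_{i+1}$.

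Once $\Psi_{d,n}$ is known to be a homomorphism, bijectivity follows from dimension comparison: $\dim_{\C[u^{\pm1},v]} Y_{d,n}=d^n n!$, while using $\dim H^\mu=\mu_1!\cdots\mu_d!$ and (\ref{mmu}) the target has dimension $\sum_{\mu\mmodels_d n}(n!)^2/(\mu_1!\cdots\mu_d!)=n!\sum_{\mu}\binom{n}{\mu_1,\dots,\mu_d}=n!\,d^n$. It therefore suffices to exhibit surjectivity, which can be obtained by forming, from polynomials in the $\Psi_{d,n}(t_j)$'s, the diagonal idempotents $E_I$ projecting onto a single partition $I\in\cP_d(n)$; conjugating these by suitable products of $\Psi_{d,n}(g_i)$ then produces all off-diagonal matrix units (up to scalar factors $u^{\pm k}$), while the elements $E_I\,\Psi_{d,n}(g_i)\,E_I$ yield the generators $\og_k$ of each Hecke factor in each diagonal block.
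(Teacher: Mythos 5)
The paper itself does not prove Theorem \ref{theo-iso}: it cites \cite{JaPo} (see also \cite{Po}), where the isomorphism is established by working inside $Y_{d,n}$ --- constructing the idempotents attached to characters of $\langle t_1,\dots,t_n\rangle$, exhibiting matrix units and identifying the diagonal blocks with tensor products of Hecke algebras. Your route (check the defining relations on the proposed images, then combine a rank count with a surjectivity argument) is a legitimate and arguably more elementary alternative, and the individual ingredients are sound: the computation of $\Psi_{d,n}(e_i)_{I,I}$, the reduction of the quadratic relation to $\og_k^2=u^2+uv\,\og_k$, the rank identity $\sum_{\mu\mmodels_d n}|\cP(\mu)|^2\,\mu_1!\cdots\mu_d!=n!\,d^n=\operatorname{rk}Y_{d,n}$, and the construction of the diagonal idempotents $E_I=\prod_j\frac{1}{d}\sum_s\xi_{a_j}^{-s}\Psi_{d,n}(t_j)^s$ are all correct.

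Two points in your sketch need to be actually carried out rather than announced. First, the braid relation is the bulk of the verification and you only list the cases; the case where exactly two of $i,i+1,i+2$ lie in the same part (in each of its three configurations) is precisely where the shift of the local index $k=|\{j\in I_a\ |\ j\leq i\}|$ under $s_i$ and $s_{i+1}$ must be tracked explicitly, and this is where an error would hide. Second, in the surjectivity step, conjugating $E_I$ by products of $\Psi_{d,n}(g_i)$ only yields off-diagonal matrix units (up to powers of $u$) if at every intermediate stage the transposition $s_i$ actually moves the current partition; otherwise a factor $\og_k$ appears instead of $u$. This can be arranged by choosing a suitable (e.g.\ minimal-length) word for a permutation sending $I$ to $J$, but it must be said. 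Finally, since you work over $\C[u^{\pm1},v]$ rather than a field, the conclusion ``surjective plus equal rank implies bijective'' should be justified by the standard fact that a surjective homomorphism between free modules of the same finite rank over a commutative ring is an isomorphism. With these points filled in, your argument is complete.
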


\begin{remark} We provide here the translation between the formulation in \cite{JaPo,Po} and the one above of the isomorphism. A complex character $\chi$ of the subalgebra $\mathcal{T}_{d,n}=\langle t_1,\dots,t_n\rangle$ of $Y_{d,n}$ is determined by the choice of $\chi (t_j)\in \{\xi_1,\ldots ,\xi_d  \}$ for each $j=1,\ldots,n$. 

We have a bijection $\chi\leftrightarrow I(\chi)$ between  the complex characters $\chi$ of $\mathcal{T}_{d,n}$ and the set $\cP_d(n)$. It is given by setting $I(\chi)_a:=\{j\in \{1,\ldots,n\}\ |\ \chi (t_j)=\xi_a\}$ for $a=1,\dots,d$. Moreover, the action of $\pi\in\mathfrak{S}_n$ on characters, defined by $\pi(\chi)\bigl(t_j\bigr):=\chi(t_{\pi^{-1}(j)})$, is compatible with the action on $\cP_d(n)$ in the following sense:  the character $\pi(\chi)$ corresponds to the partition $\pi(I)$ if $\chi$ corresponds to the partition $I$ (in other words, $I\bigl(\pi(\chi)\bigr)=\pi\bigl(I(\chi)\bigr)$).

This explains the equivalence of the formulation of Theorem \ref{theo-iso} with the one in \cite{JaPo,Po}, where the lines and columns of matrices were indexed by characters $\chi$.
\end{remark}

\section{Invariants for links}\label{Sec-inv}

Let $\gamma$ be another indeterminate. We work from now on over the ring $R:=\C[u^{\pm1},v^{\pm1},\gamma^{\pm1}]$ and we consider now all algebras over this extended ring $R$.

\subsection{Definition of the invariants}

We denote by $B_n$ the braid group on $n$ strands, and by $\sigma_1,\ldots,\sigma_{n-1}$ its generators in the Artin presentation ($\sigma_i$ corresponds to a positive elementary braiding between strands $i$ and $i+1$) . The defining relations are (\ref{def-1}) with $g_i$ replaced by $\sigma_i$.

The natural surjective morphism from the group algebra $RB_n$ to the Hecke algebra $H_n$ is given on the generators by $B_n\ni\sigma_i\mapsto \og_i\in H_n$, $i=1,\dots,n-1$. For a braid $\beta\in B_n$, we denote $\overline{\beta}$ its image in $H_n$.

There is also a natural surjective morphism from $B_n$ to the symmetric group $\mathfrak{S}_n$, given  on the generators by $B_n\ni\sigma_i\mapsto s_i\in \mathfrak{S}_n$, $i=1,\dots,n-1$. For a braid $\beta\in B_n$, we denote $p_{\beta}$ its image in $\mathfrak{S}_n$, and refer to $p_{\beta}$ as to the underlying permutation of $\beta$.\\

\paragraph{\textbf{HOMFLYPT polynomial.}} Let $\{\tau_n\ :\ H_n\to\C[u^{\pm1},v^{\pm1}]\}_{n\geq1}$ be a family of linear maps satisfying:
\[\begin{array}{ll}
\tau_n(xy)=\tau_{n}(yx) & \text{for $n\geq1$ and $x,y\in H_n$\,,}\\[0.5em]
\tau_{n+1}(x\og_{n}^{\pm1})=\tau_{n}(x) \quad& \text{for $n\geq1$ and $x\in H_n$\,,}
\end{array}\]
Fixing arbitrarily $\tau_1(1)\in\C[u^{\pm1},v^{\pm1}]$, it is well-known that such a family exists and is uniquely determined by the above conditions (\emph{e.g.} \cite[\S 4.5]{GP}). We call the family of maps $\{\tau_n\}_{n\geq1}$ the Markov trace on the Hecke algebras.

Let $L$ be a link and $\beta_L$ be a braid in $B_n$ closing to $L$. The HOMFLYPT polynomial $P(L)$ of the link $L$ is defined as follows
\[P(L):=\tau_n(\overline{\beta_L})\ .\]
The HOMFLYPT polynomial is a topological invariant of $L$ with values in $\C[u^{\pm1},v^{\pm1}]$.\\

\paragraph{\textbf{Invariants $T_d(L)$ from Yokonuma--Hecke algebras.}}
Let $d>0$. We consider the following map from $RB_n$ to $Y_{d,n}$ given on the generators by:
\[\delta_{d,n}\ :\ \ \ \sigma_i\mapsto \bigl(\gamma+(1-\gamma)e_i\bigr)g_i\ .\]
It is straightforward to check that $\delta_{d,n}$ extends to a morphism of algebras \cite[\S 6]{JaPo}.

For an element $x\in Y_{d,n}$, let $\bigl(\Psi_{d,n}(x)\bigr)_{\mu}$ be the component of $\Psi_{d,n}(x)$ in $\Mat_{\mu}(H^{\mu})$. We will denote by $\operatorname{Tr}_{\mu}$ the usual trace of a matrix in $\Mat_{\mu}(H^{\mu})$. Finally, let $\Comp_d^{>0}(n)$ be the set of compositions of $n$ into $d$ parts such that all parts are non-zero.

\begin{definition}\label{def-Td}
Let $L$ be a link and $\beta_L$ a braid in $B_n$ closing to $L$. For $d>0$, we define:
\begin{equation}\label{def-Td-form}
T_d(L):=\sum_{\mu\in\Comp_d^{>0}(n)}(\tau_{\mu_1}\otimes\dots\otimes\tau_{\mu_d})\circ\operatorname{Tr}_{\mu}\circ \Bigl(\Psi_{d,n}\bigr(\delta_{d,n}(\beta_L)\bigr)\Bigr)_{\mu}\ .
\end{equation}
\end{definition}
It is proved in \cite{JaPo} that $T_d(L)$ is an invariant of the link $L$. The invariant $T_d(L)$ takes values in $R=\C[u^{\pm1},v^{\pm1},\gamma^{\pm1}]$. An equivalent formula, using the notation of the preceding section, is
\begin{equation}\label{def-Td-form2}
T_d(L):=\sum_{I\in\cP^{>0}_d(n)}(\tau_{\mu_1}\otimes\dots\otimes\tau_{\mu_d})\Bigl(\Psi_{d,n}\bigr(\delta_{d,n}(\beta_L)\bigr)_{I,I}\Bigr)\ ,
\end{equation}
where $\cP^{>0}_d(n)$ is the subset of $\cP_d(n)$ consisting of partitions $I=I_1\sqcup\dots\sqcup I_d$ with $I_a\neq\emptyset$ for every $a=1,\dots,d$.

\begin{remarks}$\textbf{(i)}$ By definition, $T_1(L)$ is the HOMFLYPT polynomial $P(L)$.

$\textbf{(ii)}$ The Juyumaya--Lambropoulou invariants obtained from $Y_{d,n}$ in\cite{JuLa2} (see also \cite{ChJKL} and references therein) are certain combinations of the invariants $T_d(L)$ for some specialisations of the parameter $\gamma$. This is explained in details in \cite{Po}.
\end{remarks}

\section{Main result}

To fix conventions, in a braid diagram, we read from bottom to top and we number all strands of the braid with the indices it starts at the bottom. We read the braid word from left to right accordingly.
For instance, the braid word corresponding to the braid on the left of Figure \ref{erase} is $\sigma_1^{-1}\sigma_2\sigma_1^{-1}\sigma_2\sigma_1^{-1}$.\\

\paragraph{\textbf{Removing strands maps.}} 

Let $J\subset\{1,\dots,n\}$ and let $\beta\in B_n$. Starting from a braid diagram of $\beta$, erase all strands with number not belonging to $J$. We give an example of this process in Figure \ref{erase}. We denote by $F_J(\beta)$ the braid corresponding to the resulting diagram. This gives a well-defined map $F_J\ :\ B_n\to B_{|J|}$ from the braid group on $n$ strands to the braid group on $|J|$ strands.

\begin{center}
\begin{figure}
\scalebox{0.2}{\input{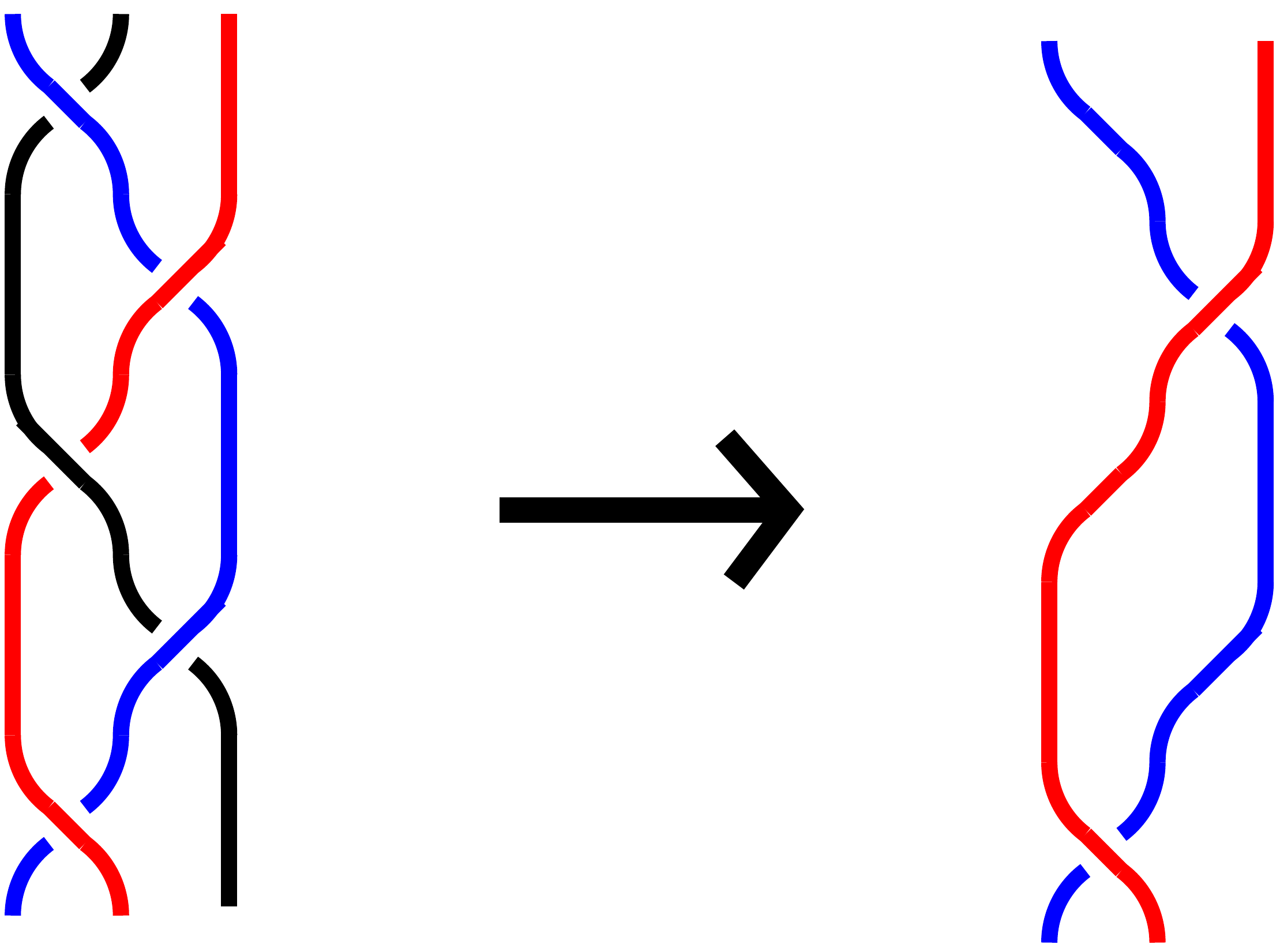_t}}
\caption{Example of the removing map with $J=\{1,2\}$.}
\label{erase}
\end{figure}
\end{center}

Let $\beta\in B_n$ and $i\in\{1,\dots,n-1\}$. It is immediate to check that the following formulas are satisfied:
\begin{equation}\label{form-F}
F_J(1)=1\ \ \ \ \ \ \text{and}\ \ \ \ \ \ \ F_J(\beta\sigma_i^{\pm1})=\left\{\begin{array}{ll}
F_J(\beta)\sigma_k^{\pm1}\quad & \text{if $\{i,i+1\}\subset J$\,,}\\[0.5em]
F_{s_i(J)}(\beta) & \text{otherwise,}
\end{array}\right.
\end{equation}
where, if $\{i,i+1\}\subset J$ then the index $k$ is $|\{j\in J\ |\ j\leq i\}|$. 
Note that if $i\notin J$ and $i+1\notin J$ then $F_J(\beta\sigma_i^{\pm1})=F_J(\beta)$.\\

\paragraph{\textbf{Linking numbers.}} Let $J\subset\{1,\dots,n\}$. In a braid diagram, a crossing is called mixed (relative to $J$) whenever it involves a strand with number in $J$ with a strand with number not in $J$. Let $\beta\in B_n$ and denote by $l_J(\beta)$ the number of positive mixed crossing minus the number of negative mixed crossing. This gives a well-defined map $l_J\ :\ B_n\to \Z$.

It is immediate to check that the following formulas are satisfied:
\begin{equation}\label{form-lJ}
l_J(1)=0\ \ \ \ \ \ \text{and}\ \ \ \ \ \ \ l_J(\beta\sigma_i^{\pm1})=\left\{\begin{array}{ll}
l_J(\beta) & \text{if $s_i(J)=J$\,,}\\[0.5em]
l_J(\beta)\pm1\quad &  \text{otherwise,}
\end{array}\right.
\end{equation}
where $\beta\in B_n$ and $i\in\{1,\dots,n-1\}$.

Finally we set, for $I=I_1\sqcup\dots\sqcup I_d\in\cP_d(n)$ and $\beta\in B_n$,
\begin{equation}\label{form-lI}
l_I(\beta):=\frac{l_{I_1}(\beta)+\dots+l_{I_d}(\beta)}{2}\ .
\end{equation}

\paragraph{\textbf{Main result.}}We are now ready to formulate the main result on the isomorphism.
\begin{theorem}\label{prop1}
Let $\beta\in B_n$. The only non-zero coefficients in $\Psi_{d,n}\bigl(\delta_{d,n}(\beta)\bigr)$ are
\[\Psi_{d,n}\bigl(\delta_{d,n}(\beta)\bigr)_{p_{\beta}(I),I}=(u\gamma)^{l_I(\beta)}\overline{F_{I_1}(\beta)}\otimes\dots\otimes\overline{F_{I_d}(\beta)}\ ,\]
where $I\in\cP_d(n)$.
\end{theorem}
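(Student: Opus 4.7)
The plan is to prove the theorem by induction on the length of $\beta$ written as a word in the Artin generators $\sigma_1^{\pm1},\ldots,\sigma_{n-1}^{\pm1}$. The base case $\beta=1$ is immediate: $\delta_{d,n}(1)=1$ maps under $\Psi_{d,n}$ to the block-diagonal identity, so the only non-zero coefficients are $\Psi_{d,n}(\delta_{d,n}(1))_{I,I}=1$, matching the right-hand side since $p_1=\mathrm{id}$, $l_I(1)=0$, and $F_{I_c}(1)=1$.

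Before the inductive step I would compute $\Psi_{d,n}(\delta_{d,n}(\sigma_i^{\pm 1}))$ explicitly. Idempotency of $e_i$ yields $(\gamma+(1-\gamma)e_i)^{-1}=\gamma^{-1}+(1-\gamma^{-1})e_i$, and since $e_i$ commutes with $g_i$ one obtains $\delta_{d,n}(\sigma_i^{\pm 1})=(\gamma^{\pm 1}+(1-\gamma^{\pm 1})e_i)\,g_i^{\pm 1}$. The image of $e_i$ under $\Psi_{d,n}$ is diagonal with coefficient $1$ at $(I,I)$ when $s_i(I)=I$ and $0$ otherwise; combining this with (\ref{iso-2}), the only non-zero entry at column $I$ of $\Psi_{d,n}(\delta_{d,n}(\sigma_i^{\pm 1}))$ is at row $s_i(I)$, equal to $(u\gamma)^{\pm 1}$ when $s_i(I)\neq I$ and to $1\otimes\cdots\otimes\og_k^{\pm 1}\otimes\cdots\otimes 1$ when $s_i(I)=I$ (with $\og_k^{\pm 1}$ in the position $a$ such that $i\in I_a$, and $k=|\{j\in I_a\,:\,j\leq i\}|$).

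The inductive step then uses the factorization $\Psi_{d,n}(\delta_{d,n}(\beta\sigma_i^{\pm 1}))=\Psi_{d,n}(\delta_{d,n}(\beta))\cdot\Psi_{d,n}(\delta_{d,n}(\sigma_i^{\pm 1}))$. For each column index $I$, only the intermediate index $K=s_i(I)$ contributes to the product, and the induction hypothesis at $(J,s_i(I))$ forces $J=p_\beta(s_i(I))=p_{\beta\sigma_i^{\pm 1}}(I)$, which establishes the vanishing assertion. The value at $(p_{\beta\sigma_i^{\pm 1}}(I),I)$ is the product of $(u\gamma)^{l_{s_i(I)}(\beta)}\bigotimes_c\overline{F_{(s_i(I))_c}(\beta)}$ (from the hypothesis) with the explicit factor described above. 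When $s_i(I)=I$ one has $(s_i(I))_c=I_c$ for every $c$, and right-multiplication by the tensor carrying $\og_k^{\pm 1}$ turns $\overline{F_{I_a}(\beta)}$ into $\overline{F_{I_a}(\beta\sigma_i^{\pm 1})}$ via the first line of (\ref{form-F}); the other tensor factors are preserved by the second line of (\ref{form-F}) (applied with $\{i,i+1\}\cap I_c=\emptyset$), and the exponent is preserved by (\ref{form-lJ})-(\ref{form-lI}). When $s_i(I)\neq I$, the condition $\{i,i+1\}\not\subset I_c$ holds for every $c$, so the second line of (\ref{form-F}) identifies $\overline{F_{(s_i(I))_c}(\beta)}$ with $\overline{F_{I_c}(\beta\sigma_i^{\pm 1})}$; the extra factor $(u\gamma)^{\pm 1}$ then promotes $(u\gamma)^{l_{s_i(I)}(\beta)}$ to $(u\gamma)^{l_I(\beta\sigma_i^{\pm 1})}$ via the combinatorial identity obtained from (\ref{form-lJ})-(\ref{form-lI}) applied to the two parts $I_a\ni i$ and $I_b\ni i+1$.

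The main bookkeeping obstacle lies in this last exponent match: one must apply (\ref{form-lJ}) to the two parts $I_a$ and $I_b$ containing $i$ and $i+1$, check that no other part contributes a change, and use the factor $1/2$ in (\ref{form-lI}) to extract exactly $\pm 1$. Everything else then reduces to checking that the recursions (\ref{form-F}) and (\ref{form-lJ}) are precisely the combinatorial translations of how appending one braid generator affects the right-hand side of the theorem.
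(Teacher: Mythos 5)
Your proposal is correct and follows essentially the same route as the paper: compute $\Psi_{d,n}\circ\delta_{d,n}$ on the generators $\sigma_i^{\pm1}$ (finding $(u\gamma)^{\pm1}$ off the diagonal blocks and $1\otimes\cdots\otimes\og_k^{\pm1}\otimes\cdots\otimes 1$ on them), then induct on the word length via the matrix product, splitting into the cases $s_i(I)=I$ and $s_i(I)\neq I$ and matching against the recursions (\ref{form-F}) and (\ref{form-lJ})--(\ref{form-lI}). Your explicit computation of $\delta_{d,n}(\sigma_i^{-1})=(\gamma^{-1}+(1-\gamma^{-1})e_i)g_i^{-1}$ is a detail the paper leaves implicit, but it does not change the argument.
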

\begin{proof}
Let $i\in\{1,\dots,n-1\}$ and $I=I_1\sqcup\dots\sqcup I_d\in\cP_d(n)$. The formulas in Theorem \ref{theo-iso} imply that the coefficient $\Psi_{d,n}(e_i)_{I,I}$ is equal to 1 if $i$ and $i+1$ belongs to the same component of $I$ (that is, if $s_i(I)=I)$). And moreover all other coefficients of $\Psi_{d,n}(e_i)$ are zero. Therefore, the only non-zero coefficients in $\Psi_{d,n}\bigl(\delta_{d,n}(\sigma_i)\bigr)$ are
\begin{equation}\label{Psidelta-g}
\Psi_{d,n}\bigl(\delta_{d,n}(\sigma_i)\bigr)_{I,s_i(I)}= \left\{\begin{array}{ll}
u\gamma & \text{if $s_i(I)\neq I$\,,}\\[0.5em]
1\otimes \dots 1\otimes \og_k\otimes 1\dots \otimes 1\quad & \text{if $s_i(I) = I$\,,}
\end{array}\right.
\end{equation}
where $I\in\cP_d(n)$ and, in the second line, $\og_k$ is in position $a$ determined by $i\in I_a$ and $k=|\{j\in I_a\ |\ j\leq i\}|$.

Let $\beta\in B_n$. The proposition is trivial if $\beta=1$. So let $\beta=\beta'\sigma_i^{\pm1}$ with $\beta'\in B_n$ and $i\in\{1,\dots,n-1\}$, and assume that the proposition is satisfied for $\beta'$. Let $I,I'\in\cP_d(n)$ of the same shape. As $\Psi_{d,n}$ and $\delta_{d,n}$ are morphisms of algebras, we have
\[\Psi_{d,n}\bigl(\delta_{d,n}(\beta)\bigr)_{I',I}=\sum_{I''}\Psi_{d,n}\bigl(\delta_{d,n}(\beta')\bigr)_{I',I''}\Psi_{d,n}\bigl(\delta_{d,n}(\sigma_i^{\pm1})\bigr)_{I'',I}\ ,\]
where the sum is over $I''\in\cP_d(n)$ of the same shape than $I,I'$. Formula (\ref{Psidelta-g}) shows in particular that the only term in the sum which can be non-zero is obtained for $I''=s_i(I)$. Then the theorem applied to $\beta'$ shows that this term is actually non-zero only if $I'=p_{\beta'}s_i(I)$. As $p_{\beta'}s_i=p_{\beta}$, we obtain that the coefficient $\Psi_{d,n}\bigl(\delta_{d,n}(\beta)\bigr)_{I',I}$ is non-zero only if $I'=p_{\beta}(I)$. Moreover, in this case, we have
\[\Psi_{d,n}\bigl(\delta_{d,n}(\beta)\bigr)_{p_{\beta}(I),I}=\Psi_{d,n}\bigl(\delta_{d,n}(\beta')\bigr)_{p_{\beta'}s_i(I),s_i(I)}\Psi_{d,n}\bigl(\delta_{d,n}(\sigma_i^{\pm1})\bigr)_{s_i(I),I}\ ,\]
We conclude the calculation using the theorem applied to $\beta'$ and Formula (\ref{Psidelta-g}). First if $s_i(I)\neq I$, we obtain
\[\Psi_{d,n}\bigl(\delta_{d,n}(\beta)\bigr)_{p_{\beta}(I),I}=(u\gamma)^{l_{s_i(I)}(\beta')\pm1}\overline{F_{s_i(I_1)}(\beta')}\otimes\dots\otimes\overline{F_{s_i(I_d)}(\beta')}\ .\]
In this case, using the properties of the maps $F_J$ and $l_J$ recalled above, we have $F_{I_a}(\beta)=F_{s_i(I_a)}(\beta')$ for every $a=1,\dots,d$, and $l_I(\beta)=l_{s_i(I)}(\beta')\pm1$. So we obtain the desired formula.

On the other hand, if $s_i(I)=I$, we have
\[\Psi_{d,n}\bigl(\delta_{d,n}(\beta)\bigr)_{p_{\beta}(I),I}=(u\gamma)^{l_{I}(\beta')}\overline{F_{I_1}(\beta')}\otimes\dots\otimes\overline{F_{I_a}(\beta')}\overline{g_k}^{\pm1}\otimes\dots\otimes\overline{F_{I_d}(\beta')}\ ,\]
where $a$ is such that $i\in I_a$ and $k=|\{j\in I_a\ |\ j\leq i\}|$. In this case, we have $F_{I_b}(\beta)=F_{I_b}(\beta')$ for every $b\neq a$, $F_{I_a}(\beta)=F_{I_a}(\beta')\sigma_k^{\pm1}$ and $l_I(\beta)=l_{I}(\beta')$. So we obtain the desired formula as well.
\end{proof}

\paragraph{\textbf{Consequences for the invariants $T_d$.}} 
Let $\beta\in B_n$ closing to a link $L$ and let $I\in\cP_d(n)$. Assume that $p_{\beta}(I)=I$. Let $(i_1i_2\dots i_k)$ (with $k\geq 1$) be a cyclic permutation appearing in the decomposition of $p_{\beta}$ as a product of disjoint cycles. The condition $p_{\beta}(I)=I$ then means that for every such cycle, there exists $a\in\{1,\dots,d\}$ such that $i_1,i_2,\dots,i_k\in I_a$.

Recall that $F_{I_a}(\beta)$ is defined as the braid obtained from $\beta$ by keeping only the strands with indices in $I_a$. We just explained that $I_a$ is a (possibly empty) union of some cycles of $p_{\beta}$. This is equivalent to say that, for every $a\in\{1,\dots,d\}$, the closure of the braid $F_{I_a}(\beta)$ is a (possibly empty) union of connected components of $L$ (\emph{i.e.} a sublink of $L$).

As a conclusion, there is a bijection
\[\{\ I\in\cP_d(n)\ \text{such that}\ p_{\beta}(I)=I\ \}\ \ \ \ \longleftrightarrow\ \ \ \ (L_1,\dots,L_d)\ ,\]
between the set of $I\in\cP_d(n)$ such that $p_{\beta}(I)=I$, and the set of complete $d$-tuples $(L_1,\dots,L_d)$ of sublinks of $L$ (complete means that the union is $L$). The map from left to right is given by $L_a=F_{I_a}(\beta)$ as explained above. The inverse map is obtained by defining $I_a$ to be the set of indices of the strands forming $L_a$. 

Finally, we note that under the bijection above, the integer $l_{\beta}(I)$ becomes by definition 
\[l(L_1,\dots,L_d):=\sum_{1\leq i<j\leq d}\text{lk}(L_i,L_j) \ ,\]
where $\text{lk}(L_i,L_j)$ is the linking number between components $L_i$ and $L_j$. Moreover, the set $\cP_d^{>0}(n)$ of Formula (\ref{def-Td-form2}) corresponds to complete $d$-tuples of non-empty sublinks of $L$.

With these remarks, it remains only to combine the defining formula (\ref{def-Td-form2}) for the invariant $T_d(L)$ together with Theorem \ref{prop1} to obtain the following corollary announced in the introduction.
\begin{corollary}\label{coro-fin}
Let $L$ be a link and $d>0$.
\begin{equation}
T_d(L)=d!\sum_{\{L_1,\dots,L_d\}}(u\gamma)^{l(L_1,\dots,L_d)}P(L_1)\dots P(L_d)\ ,
\end{equation}
where the sum is over the set of all complete families $\{L_1,\dots,L_d\}$ of $d$ distinct non-empty sublinks of $L$.
\end{corollary}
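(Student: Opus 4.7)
The plan is to substitute the explicit formula from Theorem~\ref{prop1} directly into the equivalent definition~(\ref{def-Td-form2}) of $T_d(L)$ and then translate the surviving sum into the topological language of sublinks. Concretely, I would fix a braid $\beta_L\in B_n$ closing to $L$ and, for each $I\in\cP^{>0}_d(n)$, evaluate the diagonal coefficient $\Psi_{d,n}(\delta_{d,n}(\beta_L))_{I,I}$ using Theorem~\ref{prop1}. By that theorem, this coefficient vanishes unless $p_{\beta_L}(I)=I$, and for such $I$ it equals $(u\gamma)^{l_I(\beta_L)}\,\overline{F_{I_1}(\beta_L)}\otimes\cdots\otimes\overline{F_{I_d}(\beta_L)}$. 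Applying the tensor product map $\tau_{\mu_1}\otimes\cdots\otimes\tau_{\mu_d}$ componentwise then produces the product $\tau_{|I_1|}(\overline{F_{I_1}(\beta_L)})\cdots\tau_{|I_d|}(\overline{F_{I_d}(\beta_L)})$, scaled by $(u\gamma)^{l_I(\beta_L)}$.

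Next, I would invoke the bijection spelled out in the paragraph preceding the statement: a partition $I\in\cP^{>0}_d(n)$ fixed by $p_{\beta_L}$ corresponds to an ordered $d$-tuple $(L_1,\dots,L_d)$ of non-empty, pairwise disjoint sublinks of $L$ whose union is $L$, where $L_a$ is the closure of $F_{I_a}(\beta_L)$. Under this correspondence, $\tau_{|I_a|}(\overline{F_{I_a}(\beta_L)})$ is by definition the HOMFLYPT polynomial $P(L_a)$, and the quantity $l_I(\beta_L)$ defined in~(\ref{form-lI}) becomes the total linking number $l(L_1,\dots,L_d)=\sum_{i<j}\text{lk}(L_i,L_j)$, as already noted in the text between~(\ref{form-lI}) and the statement of the corollary.

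Finally, I would pass from the ordered to the unordered parameterization of families. Because the sublinks in any such ordered $d$-tuple are disjoint and non-empty, they are automatically pairwise distinct as sublinks of $L$, so each unordered family $\{L_1,\dots,L_d\}$ of $d$ distinct non-empty sublinks covering $L$ arises from exactly $d!$ ordered tuples. This yields the global factor $d!$ in the desired identity. No step here is genuinely difficult: Theorem~\ref{prop1} does the heavy lifting, and the only points demanding care are the restriction to partitions fixed by $p_{\beta_L}$ (coming from the non-vanishing condition of the diagonal coefficient) and the correct combinatorial accounting in the ordered-to-unordered passage that produces the $d!$.
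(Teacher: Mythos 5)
Your proposal is correct and follows essentially the same route as the paper: substitute Theorem \ref{prop1} into Formula (\ref{def-Td-form2}), use the bijection between partitions fixed by $p_{\beta_L}$ and complete $d$-tuples of non-empty sublinks, identify $\tau_{|I_a|}(\overline{F_{I_a}(\beta_L)})$ with $P(L_a)$ and $l_I(\beta_L)$ with $l(L_1,\dots,L_d)$, and pass from ordered tuples to unordered families to obtain the factor $d!$. Your explicit justification of the $d!$ (disjoint non-empty sublinks are automatically distinct, so each unordered family has exactly $d!$ ordered representatives) is a point the paper leaves implicit, and it is handled correctly.
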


\begin{remarks} \textbf{(i)} In particular, if $d>N$, where $N$ is the number of connected components of $L$, then $T_d(L)=0$. This was already proved in \cite{Po}.

\textbf{(ii)} Let $K_1,\dots,K_N$ be the connected components of $L$. Then in $T_N(L)$, there is only one term in the sum, which is
\[T_N(L)=N!(u\gamma)^{l(K_1,\dots,K_N)}P(K_1)\dots P(K_N)\ .\]

\textbf{(iii)}  It was proved in \cite{ChJKL,JaPo}, that the invariants obtained from the Yokonuma--Hecke algebras are topologically equivalent to the HOMFLYPT polynomial for knots, but not for links.
The case $d=1$ in the main theorem recovers the case for knots, but also indicates what to expect in the case of links. Looking quickly to the pair of links founded in \cite{ChJKL}, one realizes shortly that some of them are distinguished by their linking matrix whereas, some others by their sublinks which are detected by the HOMFLYPT polynomials. Let us mention here that the question of determining the linking numbers from the HOMFLYPT polynomial was studied by  Jozef Przytycki \cite{Przy} and by Adam Sikora in its Master degree thesis \cite{Sik} and that an early example of pairs of links with the same HOMFLYPT polynomial but different linking matrix was given by Joan Birman in \cite{Bir}.

\end{remarks}

\end{document}